\documentclass[12pt]{amsart}

\usepackage{amsthm, amsmath, amssymb, verbatim}

\numberwithin{equation}{section}
	     
\newtheorem{thm}{Theorem}[section] 
\newtheorem{prop}{Proposition}[section]

\newtheorem{cor}{Corollary}[section]
     
\theoremstyle{definition} 

\theoremstyle{remark} 
\newtheorem{rem}{Remark\rm}[section]

\newcommand{\ep}{\varepsilon}

\newcommand{\as}{\arcsin}
\newcommand{\asn}{\operatorname{arcsn}}
\newcommand{\sn}{\operatorname{sn}}
\newcommand{\cn}{\operatorname{cn}}
\newcommand{\dn}{\operatorname{dn}}
\newcommand{\am}{\operatorname{am}}


\begin{document}
\title[Generalized elliptic functions and their application]
{Generalized elliptic functions and their application to
a nonlinear eigenvalue problem with $p$-Laplacian}
\author{Shingo Takeuchi}
\date{}
\address{Department of General Education, Kogakuin University,
2665-1 Nakano, Hachioji, Tokyo 192-0015, JAPAN}
\email{shingo@cc.kogakuin.ac.jp}
\dedicatory{Dedicated to Professor Yoshio Yamada on occasion
of his $60^{th}$ birthday}
\thanks{This work was supported by KAKENHI (No. 20740094).}
\subjclass{Primary 34L40,\ Secondary 33E05}
	
\begin{abstract}
The Jacobian elliptic functions are generalized and applied 
to a nonlinear eigenvalue problem with $p$-Laplacian.
 The eigenvalue and the corresponding eigenfunction are 
represented in terms of common parameters,
and a complete description of the 
spectra and a closed form representation of the 
corresponding eigenfunctions are obtained. As a by-product of 
the representation, it turns out that
a kind of solution is also a solution of 
another eigenvalue problem with $p/2$-Laplacian.  
\end{abstract}

\maketitle 
     
\section{Introduction} \setcounter{equation}{0}

In this paper we generalize the Jacobian elliptic functions and
apply them to a nonlinear eigenvalue problem
\begin{equation}
\label{eq:lep}
\begin{cases}
(\phi_p(u'))'+\lambda \phi_q(u)(1-|u|^q)=0, & t \in (0,T),\\
u(0)=u(T)=0,
\end{cases} \tag{$\mathrm{PE}_{pq}$}
\end{equation}
where $T,\ \lambda>0,\ p,\ q>1$ and 
$\phi_m(s)=|s|^{m-2}s\ (s \neq 0),\ =0\ (s=0)$.

Problem \eqref{eq:lep} 
appears frequently in various articles as stationary problems.
In particular, 
the equation for $p=q=2$ is called, e.g.,
the Allen-Cahn equation, 
the Chafee-Infante equation \cite{CI},
and a bistable reaction-diffusion equation
with logistic effect. 
The equation for $p=2<q$ is said to be a 
bistable reaction-diffusion equation with 
Allee effect. 
In case $p=n$ and $q=2$ with an $n$-dimensional domain, 
an equation of this type 
is known as the Euler-Lagrange equation of 
functional related to models 
introduced by Ginzburg and Landau 
for the study of phase transitions (cf. Problem 17 in \cite{BBH}).

As to \eqref{eq:lep} for general $p>1$, we have to mention the work 
\cite{GV} by Guedda and V\'{e}ron \cite{GV}. They showed that 
if $p=q>1$ then there exists a positive increasing 
sequence $\{\lambda_n\}$ such that 
a pair of solutions $\pm u_n$ of \eqref{eq:lep} with $(n-1)$-zeros 
$z_j=jT/n\ (j=1,2,\ldots,n-1)$
bifurcates from the trivial solution at $\lambda=\lambda_n$ and 
$|u_n| \to 1$  uniformly on any compact set of $(0,T) \setminus 
\{z_1,z_2,\ldots,z_{n-1}\}$ 
as $\lambda \to \infty$. Moreover, they proved  
that if $p=q>2$ then for each $n \in \mathbb{N}$ 
there exists $\Lambda_n>\lambda_n$ such that
$\lambda>\Lambda_n$ implies $|u_n|=1$ on 
\textit{flat cores} 
$[z_{j-1}+\frac{T}{2n}(\frac{\Lambda_n}{\lambda})^{1/p},
z_{j}-\frac{T}{2n}(\frac{\Lambda_n}{\lambda})^{1/p}]\ (j=1,2,\ldots,n)$
of $u_n$, where $z_0=0$ and $z_n=T$.
This is a great contrast to case $1<p=q \le 2$,
where $|u_n|<1$ in $[0,T]$.
Since the equation in \eqref{eq:lep} is autonomous, if $u_n\ (n \ge 2)$
has flat cores, then there exists uncountable
solution with $(n-1)$-zeros near $u_n$, which is produced by 
expanding and contracting
the flat cores with preserving its total length 
$T(1-(\frac{\Lambda_n}{\lambda})^{1/p})$. In this sense, 
the $n$-th branch $(\lambda,u_n)$ bifurcating from  
$(\lambda_n,0)$ causes 
the second bifurcation at $(\Lambda_n,u_{\Lambda_n})$ for each $n \ge 2$.

The phenomena of flat core in \cite{GV} above was generalized to case $p>2$ and $q>1$ 
by the author and Yamada \cite{TY}. They also studied change in bifurcation
depending on the relation between $p$ and $q$ (as far as the first bifurcation
is concerned, their proof can be applied to case $1<p \le 2$), 
and showed that 
for each $n \in \mathbb{N}$, 
if $p>q$ then there exists a pair of solutions $\pm u_n$ 
of \eqref{eq:lep} with $(n-1)$-zeros for $\lambda>0$; 
if $p=q$ then there exists $\lambda_n>0$ 
such that \eqref{eq:lep}
has no solution with $(n-1)$-zeros for $\lambda \le \lambda_n$
and \eqref{eq:lep} has 
a pair of solutions $\pm u_n$ for $\lambda>\lambda_n$
(the same result as \cite{GV});
if $p<q$ then there exists $\lambda_n^*>0$ such that 
\eqref{eq:lep} has
no solution with $(n-1)$-zeros for $\lambda < \lambda_n^*$
and \eqref{eq:lep} has 
a pair of solutions $\pm u_n$ for $\lambda=\lambda_n^*$ and 
\eqref{eq:lep} has two pairs of solutions $\pm u_n,\ \pm v_n$
satisfying $|u_n(t)|>|v_n(t)|$ with $t \neq z_j\ (j=0,1,\ldots,n)$
for $\lambda>\lambda_n^*$.
In this sense, the point $(\lambda_n^*,u_{\lambda_n^*})$ causes
the \textit{spontaneous bifurcation}. 
In any case, each solution $u_n$
has flat cores for sufficiently large $\lambda$.

The purpose of this paper is to obtain a complete description of the 
spectra and a closed form representation of the corresponding
eigenfunctions of \eqref{eq:lep}, while
the studies \cite{GV} and \cite{TY}
above are done in the way of phase-plane analysis
and no exact solution is given there.

For the description and representation, we first recall that 
the Jacobian elliptic function $\sn(t,k)$ with 
modulus $k \in [0,1)$ satisfies 
\begin{align}
\label{eq:pe22} 
u''+u(1+k^2-2k^2u^2)=0.
\end{align}
(e.g., Example 4 of p.\,516 in the book \cite{WW} of 
Whittaker and Watson).   
Eq.\,\eqref{eq:pe22} reminds that the solution 
of nonlinear eigenvalue problem \eqref{eq:lep} with
$p=q=2$ can be represented explicitly by using $\sn(t,k)$.
Indeed, for any given $k \in (0,1)$, 
the set of eigenvalues of 
$(\mathrm{PE}_{22})$ is given by
\begin{align}
\label{eq:lambda}
\lambda_n(k)=
(1+k^2) 
\left(\frac{2nK(k)}{T}\right)^2
\end{align}
for each $n \in \mathbb{N}$, with corresponding eigenfunctions
$\pm u_{n,k}$, where
\begin{align}
\label{eq:u}
u_{n,k}(t)=
\sqrt{\frac{2k^2}{1+k^2}}
\sn{\left(\frac{2nK(k)}{T}t,k\right)}
\end{align}
and $K(k)$ is the complete elliptic integral of the first kind
$$K(k)=\int_0^1 \frac{ds}{\sqrt{(1-s^2)(1-k^2s^2)}}$$
(cf. Section 2 in \cite{BF}).
Conversely, 
all nontrivial solutions are given by Eqs.\,\eqref{eq:lambda}
and \eqref{eq:u}, and in particular, 
it follows from Eq.\,\eqref{eq:u} that 
all solutions satisfy $|u|<1$. 

In our study on \eqref{eq:lep}, after the fashion of Jacobi's 
$\sn(t,k)$, we introduce
a new transcendental function $\sn_{pq}(t,k)$ 
with modulus $k \in [0,1)$. This satisfies
\begin{gather}
\label{eq:pepq} 
(\phi_p(u'))'+\frac{q}{p^*}\phi_q(u)(1+k^q-2k^q|u|^q)=0,
\end{gather}
where $p^*:=p/(p-1)$.
Using $\sn_{pq}(t,k)$, we can obtain a complete
description of the set of eigenvalues and 
the corresponding eigenfunctions
of \eqref{eq:lep} as Eqs.\,\eqref{eq:lambda} and \eqref{eq:u} with
$$K_{pq}(k)=\int_0^1 \frac{ds}{\sqrt[p]{(1-s^q)(1-k^qs^q)}}.$$ 
It is important that 
$K_{pq}(k)$ converges to $K_{\frac{p}{2},q}(0)$ 
as $k \to 1-0$ if and only if $p>2$. Indeed,
$$\lim_{k \to 1-0}K_{pq}(k)=
\int_0^1 \frac{ds}{(1-s^q)^{\frac{2}{p}}}=K_{\frac{p}{2},q}(0).
$$
Similarly, 
$\sn_{pq}(t,k)$ converges to $\sn_{\frac{p}{2},q}(t,0)$
as $k \to 1-0$.
These convergent properties yield the existence of 
special solutions, not necessarily $|u|<1$,
and we can really construct the solutions of \eqref{eq:lep}
with flat cores. 
Moreover, $\sn_{\frac{p}{2},q}(t,0)$ 
satisfies Eq.\,\eqref{eq:pepq} with $k=0$ and $p$ 
replaced by $p/2$ as well as Eq.\,\eqref{eq:pepq} with $k=1$. 
Thus, we obtain the following (curious) property:
a kind of solution of \eqref{eq:lep}
is also a solution of the nonlinear eigenvalue problem
with $p/2$-Laplacian
\begin{equation*}
\begin{cases}
(\phi_{\frac{p}{2}}(u'))'+\lambda \phi_q(u)=0, & t \in (0,T),\\
u(0)=u(T)=0.
\end{cases}
\end{equation*}

This paper is organized as follows. In Section 2, we introduce 
a generalized trigonometric function $\sin_{pq}(t)$ given by 
Dr\'{a}bek and Man\'{a}sevich \cite{DM} and define a new 
transcendental function $\sn_{pq}(t,k)$, which is a generalization of
the Jacobian elliptic function $\sn(t,k)$ and an extension of 
$\sin_{pq}(t)$ as $\sn_{pq}(t,0)=\sin_{pq}(t)$. 
In Section 3, we apply them to nonlinear
eigenvalue problems, particularly to the problem
considered in \cite{GV} and \cite{TY},
and obtain complete
descriptions of the set of eigenvalues and 
the corresponding eigenfunctions.

\section{Transcendental Functions}

\subsection{Generalized trigonometric functions}

Generalized trigonometric functions were 
introduced by Dr\'{a}bek and Man\'{a}sevich
\cite {DM} (see also \cite{DKT}). 
For $\sigma \in [0,1]$, we define
(in a slightly different way from \cite{DM})
\begin{align}
\label{eq:as}
\as_{pq}(\sigma):=\int_0^{\sigma}\frac{ds}{(1-s^q)^{\frac{1}{p}}},
\end{align}
where $p>1,\ q>0$. Letting $s=z^{1/q}$, we have
\begin{align*}
\as_{pq}(\sigma)=\frac{1}{q}\int_0^{\sigma^q}
z^{\frac{1}{q}-1}(1-z)^{-\frac{1}{p}}\,dz
=\frac{1}{q}\tilde{B}\left(\frac{1}{q},\frac{1}{p^*},\sigma^q\right),
\end{align*}
where $\tilde{B}(s,t,u)$ denotes the incomplete beta function
\begin{align*}
\tilde{B}(s,t,u)=\int_0^u z^{s-1}(1-z)^{t-1}\,dz.
\end{align*}
We define the constant $\pi_{pq}$ as
\begin{align*}
\pi_{pq}:=2\as_{pq}(1)=\frac{2}{q}B\left(\frac{1}{q},\frac{1}{p^*}\right),
\end{align*}
where $B(s,t)$ denotes the beta function
\begin{align*}
B(s,t)=\tilde{B}(s,t,1)=\int_0^1 z^{s-1}(1-z)^{t-1}\,dz.
\end{align*}

We have that $\as_{pq}:[0,1] \to [0,\pi_{pq}/2]$, and is strictly increasing.
Let us denote its inverse by $\sin_{pq}$. Then, $\sin_{pq}:[0,\pi_{pq}/2]
\to [0,1]$ and is strictly increasing. 
We extend $\sin_{pq}$ to all $\mathbb{R}$ (and still denote this extension by
$\sin_{pq}$) in the following form: for $t \in [\pi_{pq}/2,\pi_{pq}]$,
we set $\sin_{pq}{(t)}:=\sin_{pq}{(\pi_{pq}-t)}$, then for $t \in [-\pi_{pq},0]$,
we define $\sin_{pq}{(t)}:=-\sin_{pq}{(-t)}$, and finally we extend 
$\sin_{pq}$ to all $\mathbb{R}$ as a $2\pi_{pq}$ periodic function. 

When $0<p \le 1$, we also define $\as_{pq}$ as Eq.\,\eqref{eq:as} 
for $\sigma \in [0,1)$. 
We have that $\as_{pq}:[0,1) \to [0,\infty)$, and is strictly increasing.
Let us denote its inverse by $\sin_{pq}$. Then, $\sin_{pq}:[0,\infty)
\to [0,1)$ and is strictly increasing. 
We extend $\sin_{pq}$ to all $\mathbb{R}$
as $\sin_{pq}{(t)}:=-\sin_{pq}{(-t)}$
for $t \in (-\infty,0]$ and still denote this extension by 
$\sin_{pq}$. 

\begin{rem}
We immediately find that
$\sin_{22}{(t)}=\sin{(t)}$ and $\pi_{22}=\pi$
from the properties of the beta function. Moreover, 
$\sin_{pp}{(t)}=\sin_p{(t)}$ and $\pi_{pp}=\pi_p=\frac{2\pi}{p\sin{\frac{\pi}{p}}}$,
where $\sin_p$ and $\pi_p$ are the generalized sine function
and its half-period, respectively,
appearing in \cite{Do}, \cite{DoR} and \cite{DKT}.
\end{rem}

We define for $t \in [0,\pi_{pq}/2]$ (in case $0<p \le 1$, for $t \in [0,\infty)$)
\begin{align*}
\cos_{pq}{(t)}:=(1-\sin_{pq}^q{(t)})^{\frac{1}{p}},
\end{align*}
then we obtain
\begin{align*}
&\cos_{pq}^p{(t)}+\sin_{pq}^q{(t)}=1,\\
&\frac{d}{dt}\sin_{pq}{(t)}=\cos_{pq}{(t)}.
\end{align*}

\begin{prop}
For $p,\ q>1$, $\sin_{pq}$ satisfies 
for all $\mathbb{R}$
\begin{equation}
\label{eq:sin}
(\phi_p(u'))'+\frac{q}{p^*}\phi_q(u)=0.
\end{equation}
\end{prop}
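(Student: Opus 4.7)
The plan is to establish \eqref{eq:sin} first on the base interval $[0,\pi_{pq}/2]$ by direct computation, then to propagate it to all of $\mathbb{R}$ using the symmetries that underlie the piecewise definition of $\sin_{pq}$, and finally to verify the smooth matching of $\phi_p(u')$ at the junction points.

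For the base interval I set $u(t):=\sin_{pq}(t)$ and differentiate the defining identity $\as_{pq}(u(t))=t$ to obtain $u'=(1-u^q)^{1/p}=\cos_{pq}(t)\ge 0$, so $\phi_p(u')=(u')^{p-1}=(1-u^q)^{(p-1)/p}$. Differentiating this expression once more, and using $u'=(1-u^q)^{1/p}$ to cancel the singular factor $(1-u^q)^{-1/p}$, yields
\begin{align*}
(\phi_p(u'))'=\frac{p-1}{p}(1-u^q)^{-1/p}(-qu^{q-1})u'=-\frac{q(p-1)}{p}u^{q-1}=-\frac{q}{p^*}\phi_q(u),
\end{align*}
which is exactly \eqref{eq:sin} on $[0,\pi_{pq}/2]$.

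Next I would observe that \eqref{eq:sin} is autonomous and, since $\phi_p$ and $\phi_q$ are both odd, invariant under the two transformations $u(t)\mapsto u(c-t)$ (for any constant $c$) and $u(t)\mapsto -u(t)$: a direct substitution shows that each produces two sign flips that cancel. Because $\sin_{pq}$ on $[\pi_{pq}/2,\pi_{pq}]$ is defined precisely by the reflection $t\mapsto\pi_{pq}-t$, on $[-\pi_{pq},0]$ by composing this with $u\mapsto -u$, and then extended to $\mathbb{R}$ by $2\pi_{pq}$-periodicity, the ODE \eqref{eq:sin} automatically holds in the interior of every piece.

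The main obstacle, and the only delicate point, is the $C^1$-matching of $\phi_p(u')$ at the junction points $t\in\tfrac{\pi_{pq}}{2}\mathbb{Z}$, which is needed for \eqref{eq:sin} to hold classically on all of $\mathbb{R}$. At $t=\pi_{pq}/2$ one has $u=1$, $u'=0$, and the explicit formula $\phi_p(u')=\pm(1-u^q)^{(p-1)/p}$ (positive on the left, negative on the right) shows $\phi_p(u')$ is continuous there; applying the derivative computation from the second paragraph to each side gives one-sided derivatives both equal to $-q/p^*=-\tfrac{q}{p^*}\phi_q(1)$. At $t=0$ one has $u=0$, $u'=1$, and both one-sided derivatives of $\phi_p(u')$ vanish, matching $-\tfrac{q}{p^*}\phi_q(0)=0$. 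Periodicity then covers every $k\pi_{pq}/2$ and completes the argument.
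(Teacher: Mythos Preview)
Your proposal is correct and follows essentially the same approach as the paper: a direct computation on $(0,\pi_{pq}/2)$, an appeal to the reflection/odd/periodic symmetries of $\sin_{pq}$ to extend to the interiors of all pieces, and a check at the junction points $n\pi_{pq}/2$. The only difference is emphasis: the paper dispatches the junction points in one line by noting that $\lim_{t\to t_n}(\phi_p(u'))'$ exists (hence equals $-\tfrac{q}{p^*}\phi_q(u(t_n))$ from either side), whereas you spell out the one-sided derivatives at $t=0$ and $t=\pi_{pq}/2$ explicitly; both amount to the same observation.
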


\begin{proof}
For $t \in (0,\pi_{pq}/2)$ we have
\begin{align*}
(\phi_p(u'))'&=(\phi_p(\cos_{pq}{(t)}))'\\
&=((1-\sin^q_{pq}{(t)})^{\frac{1}{p^*}})'\\
&=\frac{1}{p^*}(1-\sin^q_{pq}{(t)})^{-\frac{1}{p}}\cdot (-q\sin^{q-1}_{pq}{(t)})
\cdot \cos_{pq}{(t)}\\
&=-\frac{q}{p^*}\phi_q(u).
\end{align*}
By symmetry of $\sin_{pq}$, Eq.\,\eqref{eq:sin} holds true 
for $t \neq t_n:=n \pi_{pq}/2,\ n \in \mathbb{Z}$.
Since $\lim_{t \to t_n} (\phi_p(u'))'$ exists, 
$\phi_p(u')$ is differentiable also at $t=t_n$ 
and satisfies Eq.\,\eqref{eq:sin} for all $\mathbb{R}$ in the classical sense.
\end{proof}

\subsection{Generalized Jacobian elliptic functions}

We shall introduce new transcendental functions, which generalize the Jacobian 
elliptic functions. 
For $\sigma \in [0,1]$ and $k \in [0,1)$, we define
\begin{align}
\label{eq:asn}
\asn_{pq}{(\sigma)}=\asn_{pq}{(\sigma,k)}
:=\int_0^{\sigma} \frac{ds}{\sqrt[p]{(1-s^q)(1-k^qs^q)}},
\end{align}
where $p>1,\ q>0$. We define the constant $K_{pq}(k)$ as
\begin{align*}
K_{pq}=K_{pq}(k):=\asn_{pq}(1,k)=\int_0^1 \frac{ds}{\sqrt[p]{(1-s^q)(1-k^qs^q)}}
\end{align*}  

We have that $\asn_{pq}:[0,1] \to [0,K_{pq}]$, and is strictly increasing.
Let us denote its inverse by $\sn_{pq}(\cdot)=\sn_{pq}(\cdot,k)$. Then, 
$\sn_{pq}:[0,K_{pq}] \to [0,1]$ and is strictly increasing. 
We extend $\sn_{pq}$ to all $\mathbb{R}$ (and still denote this extension by
$\sn_{pq}$) in the following form: for $t \in [K_{pq},2K_{pq}]$,
we set $\sn_{pq}{(t)}:=\sn_{pq}{(2K_{pq}-t)}$, then for $t \in [-2K_{pq},0]$,
we define $\sn_{pq}{(t)}:=-\sn_{pq}{(-t)}$, and finally we extend 
$\sn_{pq}$ to all $\mathbb{R}$ as a $4K_{pq}$ periodic function. 

When $0<p \le 1$, we also define $\asn_{pq}$ as Eq.\,\eqref{eq:asn} 
for $\sigma \in [0,1)$. 
We have that $\asn_{pq}:[0,1) \to [0,\infty)$, and
is strictly increasing. Let us denotes its inverse by 
$\sn_{pq}(\cdot)=\sn_{pq}(\cdot,k)$. Then, 
$\sn_{pq}:[0,\infty) \to [0,1)$ and is strictly increasing. 
We extend $\sn_{pq}$ to all $\mathbb{R}$ as
$\sn_{pq}{(t)}:=-\sn_{pq}{(-t)}$ for $t \in (-\infty,0]$
and still denote this extension by
$\sn_{pq}$.

The following proposition is crucial to our study.
\begin{prop}
\label{prop:K}
For $p,\ q>0$, $K_{pq}$ is continuous and strictly increasing in $[0,1)$,
$2K_{pq}(0)=\pi_{pq}$ and $\sn_{pq}(t,0)=\sin_{pq}{(t)}$.
Moreover,
\begin{align*}
&\lim_{k \to 1-0}2K_{pq}(k)=
\begin{cases}
\pi_{\frac{p}{2},q} \quad & \text{if}\;\ p>2,\\
\infty & \text{if}\;\ 0 <p \le 2,
\end{cases}\\
&\lim_{k \to 1-0}\sn_{pq}(t,k)=\sin_{\frac{p}{2},q}{(t)}.
\end{align*}
\end{prop}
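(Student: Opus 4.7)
My approach is to handle the four assertions in turn; the substance lies in the two convergence statements as $k\to1^{-}$.

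\emph{Continuity, monotonicity, and the $k=0$ values.} Set $f(s,k):=[(1-s^q)(1-k^qs^q)]^{-1/p}$. For fixed $s\in(0,1)$ the map $k\mapsto f(s,k)$ is smooth and strictly increasing on $[0,1)$, so $K_{pq}(k)=\int_0^1 f(s,k)\,ds$ is strictly increasing. Continuity at any $k_0\in[0,1)$ follows from dominated convergence with dominator $f(\cdot,k_1)$ for any $k_1\in(k_0,1)$, which is integrable since $K_{pq}(k_1)<\infty$. Substituting $k=0$ into the definition reduces $\asn_{pq}(\sigma,0)$ to $\as_{pq}(\sigma)$, so $K_{pq}(0)=\pi_{pq}/2$ and, by inversion on $[0,1]$, $\sn_{pq}(\cdot,0)=\sin_{pq}$.

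\emph{Limit of $K_{pq}$ as $k\to1^{-}$.} Since $f(s,k)$ is monotone increasing in $k$ for each $s\in(0,1)$, the monotone convergence theorem yields
$$\lim_{k\to1^{-}}K_{pq}(k)=\int_0^1\frac{ds}{(1-s^q)^{2/p}}.$$
By definition $\as_{p/2,q}(\sigma)=\int_0^\sigma(1-s^q)^{-2/p}\,ds$, so the right-hand side equals $\as_{p/2,q}(1)=\pi_{p/2,q}/2$ whenever the integral converges, which happens iff $2/p<1$, i.e.\ $p>2$. For $0<p\le 2$ the integrand is not integrable near $s=1$ and the limit is $+\infty$.

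\emph{Pointwise convergence of $\sn_{pq}(\cdot,k)$ to $\sin_{p/2,q}$.} Fix $t$ in the domain of $\sin_{p/2,q}$ and set $\sigma_*:=\sin_{p/2,q}(t)\in[0,1)$, so that $\as_{p/2,q}(\sigma_*)=t$. Choose $\ep>0$ with $[\sigma_*-\ep,\sigma_*+\ep]\subset[0,1)$. For each fixed $\sigma$ in this interval, monotone convergence applied on $[0,\sigma]$ gives $\asn_{pq}(\sigma,k)\to\as_{p/2,q}(\sigma)$, and so for $k$ close enough to $1$,
$$\asn_{pq}(\sigma_*-\ep,k)<t<\asn_{pq}(\sigma_*+\ep,k).$$
Applying the strictly increasing inverse $\sn_{pq}(\cdot,k)$ then gives $|\sn_{pq}(t,k)-\sigma_*|<\ep$.

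\emph{Main obstacle.} The essential point is to recognize the pointwise limit integrand $(1-s^q)^{-2/p}$ as the integrand of $\as_{p/2,q}$ and to read off the dichotomy $p>2$ vs.\ $p\le 2$ from integrability at $s=1$; once these integral limits are in hand, the convergence of inverses reduces to a routine monotonicity argument.
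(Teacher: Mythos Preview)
Your treatment of continuity, monotonicity, and the limit of $K_{pq}(k)$ is essentially the paper's argument (the paper phrases the $p>2$ case as Beppo Levi's monotone convergence and the $0<p\le 2$ case as Fatou's lemma; your single appeal to monotone convergence plus the integrability dichotomy at $s=1$ handles both cases at once).

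For the convergence $\sn_{pq}(t,k)\to\sin_{p/2,q}(t)$ you take a genuinely different route: instead of the paper's contradiction argument (extract a subsequential limit of $\sigma_{k_j}=\sn_{pq}(t_0,k_j)$ by compactness and identify it via $\asn_{pq}$), you squeeze directly using monotonicity of $\asn_{pq}(\cdot,k)$ and pointwise convergence $\asn_{pq}(\sigma,k)\to\as_{p/2,q}(\sigma)$. This is shorter and avoids Bolzano--Weierstrass. However, as written it only treats $t$ for which $\sn_{pq}(\cdot,k)$ coincides with the literal inverse of $\asn_{pq}(\cdot,k)$, i.e.\ $t\in[0,K_{pq}(k)]$; effectively you have proved the claim only for $t\in(0,\pi_{p/2,q}/2)$ when $p>2$, and for $t>0$ when $0<p\le 2$ (the latter, together with oddness, suffices for that range since $K_{pq}(k)\to\infty$). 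For $p>2$ the extension to general $t$ is \emph{not} a pure symmetry reduction, because $\sn_{pq}(\cdot,k)$ is $4K_{pq}(k)$-periodic with a $k$-dependent period: one must first use $K_{pq}(k)\to\pi_{p/2,q}/2$ to ensure that for $k$ near $1$ the point $t$ lies in the half-period $[nK_{pq}(k),(n+1)K_{pq}(k))$ with the \emph{same} index $n$ determined by $t\in[n\pi_{p/2,q}/2,(n+1)\pi_{p/2,q}/2)$, and only then reduce to the fundamental domain. The paper carries this step out explicitly; you should add it. You also need to treat separately the endpoints $t$ with $\sin_{p/2,q}(t)=1$ (e.g.\ $t=\pi_{p/2,q}/2$), where $\sigma_*=1\notin[0,1)$ and your choice of $\ep$ with $[\sigma_*-\ep,\sigma_*+\ep]\subset[0,1)$ is impossible.
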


\begin{proof}
The first half is trivial from the definitions of $K_{pq}$ and $\sn_{pq}$.
If $p>2$, then the monotone convergence theorem of Beppo Levi gives
$$\lim_{k \to 1-0}2K_{pq}(k)=2\int_0^1 \frac{ds}{(1-s^q)^{\frac{2}{p}}}=
2\arcsin_{\frac{p}{2},q}(1)=\pi_{\frac{p}{2},q}.$$ 
If $0<p \le 2$, then $2K_{pq}(k)$ diverges to $\infty$ as $k \to 1-0$
by Fatou's lemma. 

The last property is proved as follows.
By the symmetry of $\sn_{pq}(\cdot,k)$, we may assume $t>0$.    
Suppose $p>2$ and that 
there exist $t_0,\ \ep>0$ and $\{k_j\}$ such that
$k_j \to 1$ as $j \to \infty$ and 
\begin{align}
\label{eq:conti}
|\sigma_{k_j}-\sin_{\frac{p}{2},q}(t_0)| \ge \ep,
\end{align}
where $\sigma_{k_j}=\sn_{pq}{(t_0,k_j)}$.
Let $n \in \mathbb{Z}$ be the number satisfying
$t_0 \in I_n:=[n\pi_{\frac{p}{2},q}/2,
(n+1)\pi_{\frac{p}{2},q}/2)$ and $j \in \mathbb{N}$ a large number
satisfying $t_0 \in I_n(k_j):=[nK_{pq}(k_j),(n+1)K_{pq}(k_j))$. 
We write $\sn_{pq}^{(n)}(\cdot,k_j)$ 
as $\sn_{pq}(\cdot,k_j)$ on 
$I_n(k_j)$
and $\sin_{pq}^{(n)}(\cdot)$ as 
$\sin_{pq}(\cdot)$ on 
$I_n$.
Now, since $\sigma_{k_j}$ is 
bounded, we can choose a subsequence $\{k_{j'}\}$ of $\{k_j\}$ 
such that $\sigma_{k_{j'}} \to \sigma$ 
for some $\sigma \in [-1,1]$ as $j' \to \infty$. Thus, as $j' \to \infty$ 
$$t_0=nK_{pq}(k_{j'})+\asn_{pq}(\sigma_{k_{j'}})
\to \frac{n\pi_{\frac{p}{2},q}}{2}+
\arcsin_{\frac{p}{2},q}(\sigma),$$
and hence $\sigma=\sin_{\frac{p}{2},q}^{(n)}(t_0)$, 
which contradicts \eqref{eq:conti}. 
The proof to case $0<p \le 2$ is similar and we omit it.  
\end{proof}

\begin{rem}
In case $p>2$, 
$2K_{pq}(k)$ and $\sn_{pq}(\cdot,k)$ converge to the finite value 
$\pi_{\frac{p}{2},q}$ and to the finite-periodic function 
$\sin_{\frac{p}{2},q}$ as $k \to 1-0$, respectively. 
This is quite different from case $p=2$,
where $2K_{2q}(k)$ diverges to $\infty$ and 
$\sn_{22}{(t,k)}$ converges to the monotone increasing function
$\sin_{12}{(t)}=\tanh{(t)}$ as $k \to 1-0$.
\end{rem}

We define for $t \in [0,K_{pq}]$ (in case $0<p \le 1$, for $t \in [0,\infty)$)
\begin{align*}
\cn_{pq}{(t)}& :=(1-\sn_{pq}^q{(t)})^{\frac{1}{p}},\\
\dn_{pq}{(t)}& :=(1-k^q\sn_{pq}^q{(t)})^{\frac{1}{p}},
\end{align*}
then we obtain
\begin{align*}
&\cn_{pq}^p{(t)}+\sn_{pq}^q{(t)}=1,\\
&\frac{d}{dt}\sn_{pq}{(t)}=\cn_{pq}{(t)}\dn_{pq}{(t)}.
\end{align*}

\begin{prop}
For $p,\ q>1$, $\sn_{pq}$ satisfies 
for all $\mathbb{R}$
\begin{equation}
\label{eq:sn}
(\phi_p(u'))'+\frac{q}{p^*}\phi_q(u)(1+k^q-2k^q|u|^q)=0,
\end{equation}
which includes Eq.\,\eqref{eq:sin} as case $k=0$.
\end{prop}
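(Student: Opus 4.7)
The plan is to verify \eqref{eq:sn} first on the fundamental interval $(0,K_{pq})$ by direct differentiation, then extend it to all of $\mathbb{R}$ using the symmetry, oddness, and periodicity built into the definition of $\sn_{pq}$, and finally handle the classical regularity at the gluing points by mimicking the limiting argument used for $\sin_{pq}$.

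On $(0,K_{pq})$ the function $u=\sn_{pq}(t)$ is strictly increasing with $u'=\cn_{pq}(t)\dn_{pq}(t)=((1-u^q)(1-k^qu^q))^{1/p}>0$. Since $1/p^*=(p-1)/p$, this yields
\[
\phi_p(u')=(u')^{p-1}=\bigl((1-u^q)(1-k^qu^q)\bigr)^{1/p^*}.
\]
Differentiating, the exponent produces a factor $((1-u^q)(1-k^qu^q))^{-1/p}$, while the algebraic identity $(1-k^qu^q)+k^q(1-u^q)=1+k^q-2k^qu^q$ reduces $\frac{d}{dt}[(1-u^q)(1-k^qu^q)]$ to $-qu^{q-1}u'(1+k^q-2k^qu^q)$. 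The $u'$ coming from the chain rule cancels exactly against $((1-u^q)(1-k^qu^q))^{-1/p}$, leaving $-\frac{q}{p^*}\phi_q(u)(1+k^q-2k^q|u|^q)$ and establishing \eqref{eq:sn} classically on $(0,K_{pq})$.

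To extend \eqref{eq:sn} to the rest of $\mathbb{R}$, I exploit the reflection $\sn_{pq}(t)=\sn_{pq}(2K_{pq}-t)$ on $(K_{pq},2K_{pq})$: $u$ is preserved but $u'$ changes sign, and by the oddness of $\phi_p$ this makes $\phi_p(u')$ change sign as well, so $(\phi_p(u'))'$ transforms covariantly with the reflection; combined with the preservation of $\phi_q(u)$ and $|u|^q$, the equation transfers. Similarly, $\sn_{pq}(-t)=-\sn_{pq}(t)$, together with the oddness of $\phi_p$ and $\phi_q$ and the evenness of $|\cdot|^q$, carries \eqref{eq:sn} to $(-2K_{pq},0)$, and the $4K_{pq}$-periodicity combined with the autonomy of the equation propagates it to $\mathbb{R}\setminus\{nK_{pq}:n\in\mathbb{Z}\}$. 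The remaining obstacle is classical validity at the junction points $t_n=nK_{pq}$, where either $u=0$ or $u'=0$ and the $1/p$-th roots defining $u'$ lose smoothness. Mimicking the end of the proof for $\sin_{pq}$, I observe that the right-hand side of \eqref{eq:sn} is a continuous function of $t$, so both one-sided limits of $(\phi_p(u'))'$ exist and agree at $t_n$; hence $\phi_p(u')$ is differentiable at $t_n$ and \eqref{eq:sn} holds classically on all of $\mathbb{R}$.
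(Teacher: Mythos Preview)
Your proof is correct and follows essentially the same approach as the paper: direct differentiation on $(0,K_{pq})$ using $u'=\cn_{pq}\dn_{pq}$ and $\phi_p(u')=((1-u^q)(1-k^qu^q))^{1/p^*}$, extension by the built-in symmetries of $\sn_{pq}$, and then the limiting argument at the junction points $t_n=nK_{pq}$. You simply spell out the symmetry step and the regularity at $t_n$ in more detail than the paper does.
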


\begin{proof}
For $t \in (0,K_{pq}(k))$ we have
\begin{align*}
(\phi_p(u'))'&=(\phi_p(\cn_{pq}{(t)}\dn_{pq}{(t)}))'\\
&=(((1-\sin^q_{pq}{(t)})(1-k^q\sin^q_{pq}{(t)}))^{\frac{1}{p^*}})'\\
&=\frac{1}{p^*}((1-\sin^q_{pq}{(t)})(1-k^q\sin^q_{pq}{(t)}))^{-\frac{1}{p}}\\
& \hspace{1cm} \times (-q \sin^{q-1}_{pq}{(t)}\cdot (1+k^q-2k^q\sin^{q}_{pq}{(t)}))
\cdot \cn_{pq}{(t)}\dn_{pq}{(t)}\\
&=-\frac{q}{p^*}\phi_q(u)(1+k^q-2k^q u^q).
\end{align*}
By symmetry of $\sn_{pq}$, Eq.\,\eqref{eq:sn} holds true 
for $t \neq t_n:=n K_{pq}(k),\ n \in \mathbb{Z}$.
Since $\lim_{t \to t_n} (\phi_p(u'))'$ exists, 
$\phi_p(u')$ is differentiable also at $t=t_n$ 
and satisfies Eq.\,\eqref{eq:sn} for all $\mathbb{R}$ in the classical sense.
\end{proof}

\begin{rem}
Letting $s=\sin_{pq}{(t)}$ in Eq.\,\eqref{eq:asn}, we have
\begin{align*}
\asn_{pq}{(\sigma,k)}=\int_0^{\as_{pq}{(\sigma)}}
\frac{dt}{\sqrt[p]{1-k^q\sin_{pq}^q{(t)}}}.
\end{align*}
We define the amplitude function $\am_{pq}(\cdot,k):[0,K_{pq}(k)] \to [0,\pi_{pq}/2]$ by 
\begin{align*}
t=\int_0^{\am_{pq}{(t,k)}} \frac{d\theta}{\sqrt[p]{1-k^q\sin_{pq}^q{(\theta)}}},
\end{align*}
thus $\sn_{pq}$ is represented by $\sin_{pq}$ as
\begin{align*}
\sn_{pq}{(t,k)}=\sin_{pq}{(\am_{pq}{(t,k)})}.
\end{align*}
\end{rem}

\section{Applications}

\subsection{The $(p,q)$-eigenvalue problem}

Let $T,\ \lambda>0$ and $p,\ q>1$. We consider 
the nonlinear eigenvalue problem
\begin{equation}
\label{eq:ep}
\begin{cases}
(\phi_p(u'))'+\lambda \phi_q(u)=0, & t \in (0,T),\\
u(0)=u(T)=0.
\end{cases} \tag{$\mathrm{E}_{pq}$}
\end{equation}
Problem \eqref{eq:ep} has been studied by many authors.
In particular, in paper \cite{O} of \^Otani, 
the existence of infinitely many multi-node solutions was
proved by using subdifferential operators method and phase-plane
analysis combined with symmetry properties of the solutions.
After that, Dr\'{a}bek and Man\'{a}sevich \cite{DM}
provided explicit forms of the whole spectrum and the corresponding
eigenfunctions for \eqref{eq:ep} (see also \cite{DKT}).
We follow \cite{DM} to understand completely the set of 
all solutions of \eqref{eq:ep}.

It will be convenient to find first the solution
to the initial value problem
\begin{equation}
\label{eq:iep}
\begin{cases}
(\phi_p(u'))'+\lambda \phi_q(u)=0,\\
u(0)=0,\ u'(0)=\alpha,
\end{cases}
\end{equation}
where without loss of generality we may assume $\alpha>0$.

Let $u$ be a solution to Eq.\,\eqref{eq:iep} and let $t(\alpha)$ be
the first zero point of $u'(t)$. On interval $(0,t(\alpha))$, $u$ satisfies
$u(t)>0$ and $u'(t)>0$, and thus
\begin{align*}
\frac{u'(t)^p}{p^*}+\lambda \frac{u(t)^q}{q}
=\lambda \frac{R^q}{q}=\frac{\alpha^p}{p^*},
\end{align*}
where $R=u(t(\alpha))>0$. Solving for $u'$ and integrating, we find
\begin{align*}
\left(\frac{q}{\lambda p^*}\right)^{\frac{1}{p}}
\int_0^t \frac{u'(s)}{(R^q-u(s)^q)^{\frac{1}{p}}}\,ds=t,
\end{align*}
which after a change of variable can be written as
\begin{align*}
t=\left(\frac{q}{\lambda p^*}\right)^{\frac{1}{p}}
\frac{1}{R^{\frac{q}{p}-1}} \int_0^{\frac{u(t)}{R}}
\frac{ds}{(1-s^q)^{\frac{1}{p}}}
=\left(\frac{q}{\lambda p^*}\right)^{\frac{1}{p}}
\frac{1}{R^{\frac{q}{p}-1}}
\as_{pq}{\left(\frac{u(t)}{R}\right)}.
\end{align*}
Thus we obtain the solution to Eq.\,\eqref{eq:iep}
can be written as
\begin{align}
\label{eq:gs}
u(t)=R\sin_{pq}{\left( 
\left(\frac{\lambda p^*}{q}\right)^{\frac{1}{p}}
R^{\frac{q}{p}-1} t
\right)},
\end{align}
where 
$R=(\frac{q}{\lambda p^*})^{\frac{1}{q}}
\alpha^{\frac{p}{q}}$.

\begin{thm}
All nontrivial solutions of \eqref{eq:ep} are given as follows.
For any given $R>0$, the set of eigenvalues of 
\eqref{eq:ep} is given by
\begin{align}
\label{eq:evR}
\lambda_n(R)
=\frac{q}{p^*} \left(\frac{n \pi_{pq}}{T}\right)^p R^{p-q}
\end{align}
for each $n \in \mathbb{N}$, with corresponding eigenfunctions
$\pm u_{n,R}$, where
\begin{align}
\label{eq:efR}
u_{n,R}(t)=R \sin_{pq}
{\left(\frac{n \pi_{pq}}{T}t\right)}.
\end{align}
\end{thm}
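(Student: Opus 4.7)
The plan is to take any nontrivial solution, reduce it to the IVP representation \eqref{eq:gs} derived just before the theorem, and then translate the boundary condition $u(T)=0$ into a quantization condition that pinpoints $(\lambda_n(R),u_{n,R})$. First, I would observe that any nontrivial solution satisfies $u'(0)\neq 0$: multiplying the equation by $u'$ yields the first integral
$$\frac{|u'(t)|^p}{p^*}+\frac{\lambda\,|u(t)|^q}{q}\equiv E,$$
so if $u(0)=u'(0)=0$ then $E=0$ and $u\equiv 0$. By the $u\mapsto -u$ symmetry of \eqref{eq:ep}, I may assume $\alpha:=u'(0)>0$; the case $\alpha<0$ yields the eigenfunction $-u_{n,R}$.

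With $\alpha>0$, formula \eqref{eq:gs} represents $u$ on the first monotone arc as
$$u(t) = R\,\sin_{pq}\!\left(\left(\frac{\lambda p^*}{q}\right)^{1/p} R^{(q-p)/p}\,t\right),$$
with $R=(q/(\lambda p^*))^{1/q}\alpha^{p/q}$. Because the equation is autonomous and $\sin_{pq}$ was extended to $\mathbb{R}$ by the very symmetries that match the phase-plane dynamics (reflection about $\pi_{pq}/2$, odd reflection about $0$, $2\pi_{pq}$-periodicity), the same closed-form expression gives the unique maximal solution on all of $\mathbb{R}$, and in particular represents $u$ on $[0,T]$. Now $u(T)=0$ together with the fact that the positive zeros of $\sin_{pq}$ are exactly the multiples of $\pi_{pq}$ forces
$$\left(\frac{\lambda p^*}{q}\right)^{1/p} R^{(q-p)/p}\,T = n\,\pi_{pq}$$
for some $n\in\mathbb{N}$. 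Solving for $\lambda$ produces precisely $\lambda_n(R)$ of \eqref{eq:evR}, and substituting this back collapses the argument of $\sin_{pq}$ to $n\pi_{pq}t/T$, recovering $u_{n,R}$ of \eqref{eq:efR}. For the converse direction, given any $R>0$ and $n\in\mathbb{N}$, a direct verification shows that $(\lambda_n(R),u_{n,R})$ solves \eqref{eq:ep}: the scaling $t\mapsto (n\pi_{pq}/T)t$ combined with Proposition~2.1 applied to $\sin_{pq}$ gives the ODE, while $\sin_{pq}(0)=\sin_{pq}(n\pi_{pq})=0$ gives the boundary values.

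The main obstacle I expect is the global-extension step above: justifying that a single analytic formula represents $u$ past its first critical point, through the successive maxima, zeros, and minima needed to reach $t=T$. This rests on the fact that the symmetric/odd/periodic extension of $\sin_{pq}$ built in Section~2 was designed to mirror the phase-plane reflections of the autonomous ODE, so at each matching point the formula continues to record the unique orbit determined by $(u,u')$; uniqueness at each step follows from the first-integral identity used in the opening observation. Everything else is a routine matter of inverting the quantization condition and reading off \eqref{eq:evR} and \eqref{eq:efR}.
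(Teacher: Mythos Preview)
Your proposal is correct and follows essentially the same route as the paper: derive the closed form \eqref{eq:gs} for the IVP solution, impose the boundary condition $u(T)=0$, and read off the quantization $\left(\frac{\lambda p^*}{q}\right)^{1/p} R^{q/p-1}T=n\pi_{pq}$. The paper's proof is simply a terser version of yours---it omits the first-integral argument for $u'(0)\neq 0$, the global-extension discussion, and the explicit converse verification, treating these as implicit in the preceding derivation of \eqref{eq:gs}.
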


\begin{proof}
For given $R>0$, by imposing that $u$ in 
Eq.\,\eqref{eq:gs} satisfies the boundary conditions in \eqref{eq:ep},
we obtain that $\lambda$ is an eigenvalue of \eqref{eq:ep}
if and only if 
\begin{align*}
\left(\frac{\lambda p^*}{q}\right)^{\frac{1}{p}}
R^{\frac{q}{p}-1}T=n \pi_{pq}, \quad n \in \mathbb{N},
\end{align*}
and hence Eq.\,\eqref{eq:evR} follows. Expression \eqref{eq:efR}
for the 
eigenfunctions follows directly from Eq.\,\eqref{eq:gs}.
\end{proof}

\subsection{A perturbed $(p,q)$-eigenvalue problem}

Let $T,\ \lambda>0$ and $p,\ q>1$. We consider the 
nonlinear eigenvalue problem
\begin{equation}
\begin{cases}
(\phi_p(u'))'+\lambda \phi_q(u)(1-|u|^q)=0, & t \in (0,T),\\
u(0)=u(T)=0.
\end{cases} \tag{$\mathrm{PE}_{pq}$}
\end{equation}
Problem \eqref{eq:lep} has been studied by 
Berger and Fraenkel \cite{BF} and 
Chafee and Infante \cite{CI} ($p=q=2$), 
Wang and Kazarinoff \cite{WK} 
and Korman, Li and Ouyang \cite{KLO} ($p=2<q$),  
Guedda and V\'{e}ron \cite{GV} ($p=q>1$),
and Takeuchi and Yamada \cite{TY} ($p>2,\ q>1$).
However, there is no study providing explicit forms of the whole
spectrum and the corresponding eigenfunctions for \eqref{eq:lep}.

As we have done for \eqref{eq:ep}, 
it will be convenient to find first the solution
to the initial value problem
\begin{equation}
\label{eq:liep}
\begin{cases}
(\phi_p(u'))'+\lambda \phi_q(u)(1-|u|^q)=0,\\
u(0)=0,\ u'(0)=\alpha,
\end{cases}
\end{equation}
where without loss of generality we may assume $\alpha>0$.

Let $u$ be a solution to Eq.\,\eqref{eq:liep} and let $t(\alpha)$ be
the first zero point of $u'(t)$. On interval $(0,t(\alpha))$,
 $u$ satisfies
$u(t)>0$ and $u'(t)>0$, and thus
\begin{align*}
\frac{u'(t)^p}{p^*}+\lambda \frac{F(u)}{q}
=\lambda \frac{F(R)}{q}=\frac{\alpha^p}{p^*},
\end{align*}
where $F(s)=s^q-\frac{1}{2}s^{2q}$ and $R=u(t(\alpha))$. 
Since we are interested in functions satisfying 
the boundary condition of 
\eqref{eq:lep}, it suffices to assume $0<R \le 1$,
which means $|u| \le 1$. Moreover, we restrict
to $0<R<1$ and concentrate solutions satisfying
$|u|<1$ for a while. 

Solving for $u'$ and integrating, we find
\begin{align*}
\left(\frac{q}{\lambda p^*}\right)^{\frac{1}{p}}
\int_0^t \frac{u'(s)}{\sqrt[p]{F(R)-F(u(s))}}\,ds=t,
\end{align*}
which after a change of variable can be written as
\begin{align}
\label{eq:contradiction}
t&=\left(\frac{q}{\lambda p^*}\right)^{\frac{1}{p}}
\int_0^{\frac{u(t)}{R}}
\frac{R}{\sqrt[p]{F(R)-F(Rs)}}\,ds.
\end{align}
It is easy to verify that
$$F(R)-F(Rs)=F(R)(1-s^q)\left(1-\frac{R^q}{2-R^q}s^{q}\right),$$
and hence 
\begin{align*}
t&=\left(\frac{q}{\lambda p^*}\right)^{\frac{1}{p}}
\frac{R}{F(R)^{\frac{1}{p}}} 
\int_0^{\frac{u(t)}{R}}
\frac{ds}{\sqrt[p]{(1-s^q)(1-k^qs^q)}}
\quad \left(k^q:=\frac{R^q}{2-R^q}\right)\\
&=\left(\frac{q}{\lambda p^*}\right)^{\frac{1}{p}}
\frac{R}{F(R)^{\frac{1}{p}}}
\asn_{pq}{\left(\frac{u(t)}{R},k\right)}.
\end{align*}
Then we obtain that the solution to Eq.\,\eqref{eq:liep}
can be written as
\begin{align}
\label{eq:lgs}
u(t)=R\sn_{pq}{\left( 
\left(\frac{\lambda p^*}{q}\right)^{\frac{1}{p}}
\frac{F(R)^{\frac{1}{p}}}{R} t,k
\right)},
\end{align}
where
\begin{align}
\label{eq:k}
k&=\left(\frac{R^q}{2-R^q}\right)^{\frac{1}{q}},\\
\notag
R&=\left(
\frac{q}{\lambda p^*}\right)^{\frac{1}{q}}
\alpha^{\frac{p}{q}}
\left(\frac{1}{2}+\frac{1}{2}\sqrt{1-
\frac{2q}{\lambda p^*}\alpha^p}\right)^{-\frac{1}{q}}.
\end{align}

We first observe the structure of the set of all 
nontrivial solutions of \eqref{eq:lep} satisfying $|u|<1$.

\begin{thm}[$|u|<1$]
\label{thm:R<1}
All nontrivial solutions for $p \in (1,2]$ and 
all nontrivial solutions with $|u|<1$ for $p>2$ are given as follows.
For any given $k \in (0,1)$, 
the set of eigenvalues of 
\eqref{eq:lep} is given by
\begin{align}
\label{eq:lev}
\lambda_n(k)=
\frac{q}{p^*} (1+k^q) 
\left(\frac{2k^q}{1+k^q}\right)^{\frac{p}{q}-1}
\left(\frac{2nK_{pq}(k)}{T}\right)^p
\end{align}
for each $n \in \mathbb{N}$, with corresponding eigenfunctions
$\pm u_{n,k}$, where
\begin{align}
\label{eq:lef}
u_{n,k}(t)=
\left(\frac{2k^q}{1+k^q}\right)^{\frac{1}{q}}
\sn_{pq}{\left(\frac{2nK_{pq}(k)}{T}t,k\right)}.
\end{align}
\end{thm}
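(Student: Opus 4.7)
The plan is to derive the statement from the IVP representation \eqref{eq:lgs} by imposing the boundary condition $u(T)=0$ and by rewriting the resulting formulas in terms of $k$ instead of $R$. First I would note that any nontrivial solution of \eqref{eq:lep} satisfying $|u|<1$ is, by autonomy and by uniqueness for the IVP \eqref{eq:liep}, obtained by reflecting and concatenating the basic half-arc from a zero up to the next critical point where $u=R$ for some $R\in(0,1)$. So all such solutions are of the form \eqref{eq:lgs} with $R\in(0,1)$, and by \eqref{eq:k} the corresponding modulus $k=(R^q/(2-R^q))^{1/q}$ runs through $(0,1)$. Thus it suffices to characterize when \eqref{eq:lgs} satisfies $u(T)=0$.

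Next I would carry out the algebraic reduction of the coefficients. Since $F(s)=s^q-\tfrac12 s^{2q}=s^q(1-\tfrac12 s^q)$, the relation $R^q=2k^q/(1+k^q)$ gives
\begin{align*}
1-\tfrac12 R^q=\frac{1}{1+k^q},\qquad
\frac{F(R)}{R^q}=\frac{1}{1+k^q},\qquad
\frac{R^p}{F(R)}=(1+k^q)\,R^{p-q}.
\end{align*}
The amplitude in \eqref{eq:lef} is then $R=(2k^q/(1+k^q))^{1/q}$, and the zeros of $\sn_{pq}(\cdot,k)$ are exactly the integer multiples of $2K_{pq}(k)$. Requiring that the argument of $\sn_{pq}$ in \eqref{eq:lgs} equal $2nK_{pq}(k)$ at $t=T$ gives
\begin{align*}
\Bigl(\frac{\lambda p^*}{q}\Bigr)^{1/p}\frac{F(R)^{1/p}}{R}\,T=2nK_{pq}(k),
\end{align*}
and solving for $\lambda$ and substituting $R^{p-q}=(2k^q/(1+k^q))^{p/q-1}$ produces exactly \eqref{eq:lev}. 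Expression \eqref{eq:lef} then follows directly from \eqref{eq:lgs}. Conversely, for each $k\in(0,1)$ and each $n\in\mathbb{N}$, the function defined by \eqref{eq:lef} is an $n$-node solution by Proposition on $\sn_{pq}$ (Eq.\,\eqref{eq:sn}) together with the symmetry and periodicity built into $\sn_{pq}$.

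The one delicate point, and what I expect to be the main obstacle, is justifying that this exhausts \emph{all} nontrivial solutions in the stated ranges. For $p>2$ the restriction $|u|<1$ forces $R<1$ and hence $k<1$, so the parametrization covers everything; solutions with $|u|=1$ (flat cores) correspond to the limit $k\to 1-0$ and will require the separate analysis using Proposition \ref{prop:K}. For $p\in(1,2]$ I would argue that the case $R=1$ cannot produce a BVP solution at all: if $R=1$ then by Proposition \ref{prop:K} one has $2K_{pq}(k)\to\infty$ as $k\to 1-0$, so the time needed for $u$ to return to zero is infinite, contradicting $u(T)=0$ on the finite interval $[0,T]$. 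Hence for $p\le 2$ every nontrivial solution automatically has $R<1$, and the parametrization by $k\in(0,1)$ is exhaustive, which is exactly what the theorem claims.
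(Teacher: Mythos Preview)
Your approach coincides with the paper's: impose the boundary condition on the IVP representation \eqref{eq:lgs}, rewrite the coefficient $F(R)^{1/p}/R$ in terms of $k$ via \eqref{eq:k}, and read off \eqref{eq:lev}--\eqref{eq:lef}. The only difference is in the exhaustiveness argument for $1<p\le 2$: the paper works directly with the integral \eqref{eq:contradiction} at $R=1$, observing that $F(1)-F(s)=\tfrac12(1-s^q)^2$ so the integrand behaves like $(1-s^q)^{-2/p}$ and diverges; your argument via $K_{pq}(k)\to\infty$ as $k\to1-0$ is morally the same fact, but as written it is a limit statement about solutions with $R<1$ rather than a statement about the putative solution with $R=1$, so you would still need to link the two (e.g.\ by noting that the half-period integral at $R=1$ is precisely $2^{1/p}$ times that limit). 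This is a minor tightening rather than a gap.
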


\begin{proof}
For $k \in (0,1)$ given, we impose that Function
\eqref{eq:lgs} with $R \in (0,1)$ decided from Eq.\,\eqref{eq:k} 
satisfies the boundary conditions in \eqref{eq:lep}.
Then, we obtain that $\lambda$ is an eigenvalue of \eqref{eq:lep}
if and only if
\begin{align*}
\left(\frac{\lambda p^*}{q}\right)^{\frac{1}{p}}
\frac{F(R)^{\frac{1}{p}}}{R}T
=2nK_{pq}(k), \quad n \in \mathbb{N}.
\end{align*}
From Eq.\,\eqref{eq:k} again we have
\begin{align*}
\frac{F(R)^{\frac{1}{p}}}{R}=
\left(\frac{2k^q}{1+k^q}\right)^{\frac{1}{p}-\frac{1}{q}}
(1+k^q)^{-\frac{1}{p}},
\end{align*}
and hence we obtain Eq.\,\eqref{eq:lev}. Expression 
\eqref{eq:lef} for the eigenfunctions follows
then directly from Eq.\,\eqref{eq:lgs}.

It remains to show that no other nontrivial solution of
\eqref{eq:lep} is 
obtained when $1<p \le 2$. Assume the contrary.  
Then there exist $t_*>0$ and  
a nontrivial solution $u$ of \eqref{eq:lep} with $R=u(t_*)=1$.
However, the right-hand side of Eq.\,\eqref{eq:contradiction}
with $t=t_*$ diverges because
 $\sqrt[p]{F(1)-F(s)}=O((1-s^q)^{\frac{2}{p}})$
as $s \to 1-0$. Thus, $t_*=\infty$, which is a contradiction. 
\end{proof}

Next we find solutions of \eqref{eq:lep} with $|u| \le 1$,
except the solutions given by Theorem \ref{thm:R<1}.
From Proposition \ref{prop:K}, one of solutions of 
Eq.\,\eqref{eq:liep} is obtained
by $k \to 1-0$ in Eq.\,\eqref{eq:lgs} with 
Eq.\,\eqref{eq:k}, namely 
\begin{align*}
u(t)=
\sin_{\frac{p}{2},q}
{\left(\left(\frac{\lambda p^*}{2q}\right)^{\frac{1}{p}}t\right)}. 
\end{align*}
Now we assume $p>2$ and take a number $t_*$ as 
$(\frac{\lambda p^*}{2q})^{\frac{1}{p}}t_*=\pi_{\frac{p}{2},q}/2$,
then $u$ attains $1$ at $t=t_*$ (note that 
it is impossible to obtain such a solution when $1<p \le 2$).   
Using this $u$, 
we can make the other solutions of Eq.\,\eqref{eq:liep} as follows.
In the phase-plane,
the orbit $(u(t),u'(t))$ arrives at the 
equilibrium point $(1,0)$ at $t=t_*$ and 
can stay there for any finite time $\tau$ 
before it begins to leave there.
Then, the interval $[t_*,t_*+\tau]$ is a flat core of the solution.
Similarly, there is the other
equilibrium point $(-1,0)$, where the orbit can stay,
and the solution has another flat core of any finite length.
Thus we have solutions of Eq.\,\eqref{eq:liep} 
attaining $\pm 1$ with any number of flat cores.
\begin{thm}[$|u| \le 1$]
\label{thm:R=1}
Let $p>2$, then all nontrivial solutions are given as follows, 
in addition to Theorem \ref{thm:R<1}.
For any given $\tau \in [0,T)$, 
the set of eigenvalues of \eqref{eq:lep}
is given by
\begin{align*}
\Lambda_n(\tau)=\frac{2q}{p^*}
\left(\frac{n\pi_{\frac{p}{2},q}}{T-\tau}\right)^p
\end{align*}
for each $n \in \mathbb{N}$, with corresponding eigenfunctions
$\pm u_{n,\{\tau_i\}}$, where $u_{n,\{\tau_i\}}$ is  
any function given as follows: 
for any 
$\{\tau_i\}_{i=1}^{n}$ with 
$\tau_i \ge 0$ and $\sum_{i=1}^n \tau_i=\tau$   
\begin{align}
\label{eq:flatcore}
u_{n,\{\tau_i\}}(t)=
\begin{cases}
(-1)^{j-1} \sin_{\frac{p}{2},q}
{\left(\frac{n\pi_{\frac{p}{2},q}}{T-\tau}(t-T_{j-1})\right)}
& \text{if}\;\ T_{j-1} \le t \le T_{j-1}+\frac{T-\tau}{2n},\\
(-1)^{j-1} 
& \text{if}\;\ T_{j-1}+\frac{T-\tau}{2n}
\le t \le T_{j}-\frac{T-\tau}{2n},\\
(-1)^{j-1} \sin_{\frac{p}{2},q}{\left(\frac{n\pi_{\frac{p}{2},q}}{T-\tau}
(T_j-t)\right)}
& \text{if}\;\ T_{j}-\frac{T-\tau}{2n}
\le t \le T_j,\\
j=1,2,\ldots,n,
\end{cases}
\end{align}
where $T_0=0$ and 
$T_j=\frac{(T-\tau)j}{n}
+\sum_{i=1}^j \tau_i$ for $j=1,2,\ldots,n$. 
\end{thm}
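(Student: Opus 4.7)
The plan is to build the flat-core solutions by concatenating pieces of the limiting profile $\sin_{\frac{p}{2},q}$ with constant segments at $\pm 1$, and then to impose the Dirichlet conditions to extract the eigenvalues.

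First, I would pass to the limit $k \to 1-0$ in the representation \eqref{eq:lgs} of the solution of \eqref{eq:liep}. By Proposition \ref{prop:K}, in this limit $R \to 1$ and $\sn_{pq}(\cdot,k) \to \sin_{\frac{p}{2},q}$, so the candidate profile is
$$u_0(t) = \sin_{\frac{p}{2},q}\!\left(\left(\tfrac{\lambda p^*}{2q}\right)^{1/p} t\right),$$
which satisfies \eqref{eq:liep} on the interval where its argument runs through $[0,\pi_{\frac{p}{2},q}/2]$. The crucial structural fact, supplied by Proposition \ref{prop:K}, is that $\pi_{\frac{p}{2},q} < \infty$ precisely because $p > 2$; hence $u_0$ reaches the value $1$ at the finite time $t_* = (2q/(\lambda p^*))^{1/p}\,\pi_{\frac{p}{2},q}/2$ with $u_0'(t_*) = 0$.

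Second, I would verify that $u_0$ can be extended past $t_*$ by the constant $1$ for an arbitrary duration, then glued to a reflected copy of $u_0$ descending from $1$ back to $0$. Checking \eqref{eq:lep} at the gluing times reduces to showing that $\phi_p(u')$ is $C^1$ with value and derivative equal to $0$ there: on the flat side $(\phi_p(u'))' \equiv 0$ and $\phi_q(1)(1-1^q) = 0$, while on the curved side the right-hand side $-(q/p^*)\phi_q(u)(1-|u|^q)$ tends to $0$ as $u \to 1$. A sign-flip of this block furnishes the next arch, and iterating $n$ times yields $u_{n,\{\tau_i\}}$ as in \eqref{eq:flatcore}. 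Imposing $u(T)=0$ then amounts to equating the horizontal length of a single arch, $(2q/(\Lambda_n p^*))^{1/p}\pi_{\frac{p}{2},q}$, with $(T-\tau)/n$, from which the stated formula for $\Lambda_n(\tau)$ drops out.

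The main obstacle, to which I would devote the bulk of the argument, is completeness: showing that no nontrivial solution with $|u| \le 1$ exists beyond those from Theorem \ref{thm:R<1} and the family \eqref{eq:flatcore}. For this I would revisit the energy identity $(u')^p/p^* + \lambda F(u)/q = \lambda F(R)/q$ preceding \eqref{eq:contradiction}, now admitting $R = 1$. Since $F(1)-F(s) = O((1-s^q)^{2/p})$ as $s \to 1^-$ and $2/p < 1$ for $p > 2$, the resulting first-order equation for $u'$ is non-Lipschitz at the equilibria $(\pm 1,0)$; consequently an orbit that touches such a point is free to dwell there for any finite duration before departing along a half-arc of $u_0$. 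Any admissible solution therefore decomposes into a sequence of half-arcs of $u_0$ separated by flat segments whose lengths and alternating signs are pinned down by the prescribed zero count and by $u(0)=u(T)=0$, which recovers exactly the family \eqref{eq:flatcore}.
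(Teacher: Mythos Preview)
Your argument is correct and follows the same route as the paper: pass to the limit $k \to 1-0$ in \eqref{eq:lgs} to obtain the $\sin_{\frac{p}{2},q}$ profile, exploit the equilibria $(\pm 1,0)$ in the phase plane to insert flat segments of arbitrary length, and then read off $\Lambda_n(\tau)$ from the boundary condition $u(T)=0$. Your handling of the $C^1$ gluing and of completeness via the non-Lipschitz first-order equation is in fact more explicit than the paper's brief phase-plane discussion; the one minor slip is that $F(1)-F(s)=\tfrac{1}{2}(1-s^q)^2$ exactly, so it is $(F(1)-F(s))^{1/p}$, not $F(1)-F(s)$ itself, that is $O((1-s^q)^{2/p})$---the conclusion that $2/p<1$ forces non-uniqueness at the equilibria is unaffected.
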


\begin{proof}
For each $n \in \mathbb{N}$, 
it suffices to construct solutions with $(n-1)$-zeros.
Let $\tau \in [0,T)$.
They are all generated by the eigenvalue and the corresponding
eigenfunction of \eqref{eq:lep} with $T$ replaced by $T-\tau$
\begin{align*}
&\Lambda_n(\tau)=\frac{2q}{p^*}
\left(\frac{n\pi_{\frac{p}{2},q}}{T-\tau}\right)^p,\\
&u_{n,\tau}(t)=
\sin_{\frac{p}{2},q}\left(\frac{n\pi_{\frac{p}{2},q}}{T-\tau}t\right),
\end{align*}
which are obtained from Eqs.\,\eqref{eq:lev} and \eqref{eq:lef} 
with $k \to 1-0$, respectively. In the phase-plane,
the orbit $(u_{n,\tau}(t),u_{n,\tau}'(t))$ goes through 
the equilibrium points $(\pm 1,0)$ in $n$-times 
without staying there as $t$ increases from $0$ to $T-\tau$. 
Therefore, if the orbit stays the $i$-th equilibrium point for time 
$\tau_i$, where $\tau_1+\tau_2+\cdots+\tau_n=\tau$, 
then we can obtain Solution \eqref{eq:flatcore}
with $n$-flat cores in $[0,T]$. 
\end{proof}

In Theorems \ref{thm:R<1} and \ref{thm:R=1}, we give parameters 
$k$ and $\tau$ to obtain the eigenvalue and the corresponding eigenfunction
of \eqref{eq:lep}. Conversely,
giving any $\lambda>0$, we can observe the set $S_{\lambda}$
of all solutions of \eqref{eq:lep}
by considering the inverses of $\lambda_n$
and $\Lambda_n$. 

\begin{thm}
Let $p>1$ and $q>1$.

Case $p>q$. For any $\lambda>0$ there exists a strictly decreasing positive sequence 
$\{k_j\}_{j=1}^{\infty}$ such that $k_j \to 0$ as $j \to \infty$ and 
$$S_\lambda=\{0\} \cup \bigcup_{j=1}^{\infty}\{\pm u_{j,k_j}\}.$$

Case $p=q$. If 
\begin{align*}
0<\lambda \le
\frac{q}{p^*}\left(\frac{\pi_{pq}}{T}\right)^p,
\end{align*}
then $S_\lambda=\{0\}$. If 
\begin{align*}
\frac{q}{p^*}\left(\frac{n \pi_{pq}}{T}\right)^p
<\lambda \le \frac{q}{p^*}\left(\frac{(n+1) \pi_{pq}}{T}\right)^p,
\quad n \in \mathbb{N},
\end{align*}
then there exists a strictly decreasing positive sequence 
$\{k_j\}_{j=1}^{n}$ such that 
$$S_\lambda=\{0\} \cup \bigcup_{j=1}^{n}\{\pm u_{j,k_j}\}.$$

Case $p<q$. There exists $\lambda_1>0$ such that if 
$0<\lambda <\lambda_1$, then $S_\lambda=\{0\}$.
If $n^p\lambda_1 \le \lambda <(n+1)^p\lambda_1,\ n \in \mathbb{N}$,
then there exist a strictly decreasing positive sequence 
$\{k_j\}_{j=1}^{n}$ and a strictly increasing positive sequence
$\{\ell_j\}_{j=1}^{n}$ such that 
$k_j>\ell_j,\ j=1,2,\ldots,n-1$ and
$$S_\lambda=\{0\} \cup \bigcup_{j=1}^{n}\{\pm u_{j,k_j}\}
\cup \bigcup_{j=1}^{n} \{\pm u_{j,\ell_j}\},$$
where $u_{n,k_n}=u_{n,\ell_n}$ with $k_n=\ell_n$
for $\lambda=n^p \lambda_1$ and 
$|u_{n,k_n}|> |u_{n,\ell_n}|\ 
(t \neq jT/n,\ j=1,2,\ldots,n-1)$ 
with $k_n>\ell_n$ otherwise.

In any case, each $k_j,\ \ell_j$ is calculated by Eq.\,\eqref{eq:lev} 
for $\lambda_j$, and the corresponding solution is given 
in Form \eqref{eq:lef}.

When $1<p \le 2$, we have $k_j<1$. 
When $p>2$, in addition, if
$$\lambda \ge \frac{2q}{p^*}\left(\frac{m\pi_{\frac{p}{2},q}}{T}\right)^p,
\quad m \in \mathbb{N},$$
then for each $j=1,2,\ldots,m$, 
the set $\{\pm u_{j,k_j}\}$ above is replaced by 
$\cup_{\{\tau_i\}} \{\pm u_{j,\{\tau_i\}}\}$,
where $\cup_{\{\tau_i\}}$ is the union for all $\{\tau_i\}_{i=1}^j$ 
satisfying $\tau_i \ge 0$ and 
$$\sum_{i=1}^j \tau_i=T-j\pi_{\frac{p}{2},q} 
\left(\frac{2q}{\lambda p^*}\right)^{\frac{1}{p}}.$$
The nontrivial solution $u_{j,\{\tau_i\}}$ is
given in Form \eqref{eq:flatcore}.
\end{thm}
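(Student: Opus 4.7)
The approach is to invert the parametrizations already supplied by Theorems~\ref{thm:R<1} and~\ref{thm:R=1}, which together exhaust $S_\lambda \setminus \{0\}$: every nontrivial solution of \eqref{eq:lep} is of the form $\pm u_{n,k}$ for some $n \in \mathbb{N}$ and $k \in (0,1)$, and when $p>2$ possibly of the form $\pm u_{n,\{\tau_i\}}$ for some admissible $\tau \in [0,T)$. Thus it suffices, for each fixed $\lambda>0$, to determine the pairs $(n,k)$ for which $\lambda_n(k)=\lambda$ (and, when $p>2$, the pairs $(n,\tau)$ for which $\Lambda_n(\tau)=\lambda$).

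The first step is to factor out the dependence on $n$. Setting
\begin{equation*}
\mu(k) := \frac{q}{p^*}(1+k^q)\left(\frac{2k^q}{1+k^q}\right)^{\frac{p}{q}-1}\left(\frac{2K_{pq}(k)}{T}\right)^{\!p},
\end{equation*}
Eq.~\eqref{eq:lev} reads $\lambda_n(k)=n^p\mu(k)$, so $\lambda_n(k)=\lambda$ if and only if $\mu(k)=\lambda/n^p$. The analysis therefore reduces to studying the single function $\mu\colon(0,1)\to(0,\infty)$, which is continuous by Proposition~\ref{prop:K}. Using $K_{pq}(0)=\pi_{pq}/2$, the endpoint behavior at $k\to0+$ is controlled by the exponent $p/q-1$: one finds $\mu(0+)=0$ if $p>q$, $\mu(0+)=(q/p^*)(\pi_{pq}/T)^p$ if $p=q$, and $\mu(0+)=+\infty$ if $p<q$. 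At $k\to1-0$, Proposition~\ref{prop:K} gives $\mu(1-)=+\infty$ if $1<p\le 2$ and $\mu(1-)=(2q/p^*)(\pi_{p/2,q}/T)^p$ if $p>2$.

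The technical heart—and the step I expect to be the main obstacle—is establishing the monotonicity shape of $\mu$: strictly increasing when $p\ge q$ (which is direct because all three factors in $\mu$ are then increasing, using that $k\mapsto k^q/(1+k^q)$ and $K_{pq}$ are increasing) and strictly U-shaped with a unique interior minimum $\lambda_1:=\min_{(0,1)}\mu$ when $p<q$ (where the decreasing factor $(2k^q/(1+k^q))^{p/q-1}$ competes against the increasing $(1+k^q)K_{pq}(k)^p$). The U-shape in the case $p<q$ would be proved by differentiating $\log\mu$ and exploiting the sign of $K_{pq}'/K_{pq}$ together with the explicit algebraic factors; the $p\le 2$ divergence of $K_{pq}$ and the $p>2$ finite limit do not alter the shape, only the level at the right endpoint.

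Granted these properties of $\mu$, each assertion of the theorem follows from the intermediate value theorem. When $p>q$, $\mu$ bijects $(0,1)$ onto $(0,\mu(1-))$, so for every $n$ with $\lambda/n^p<\mu(1-)$ there is a unique $k_n$ with $\mu(k_n)=\lambda/n^p$, and $k_n\to 0$ as $n\to\infty$ because $\lambda/n^p\to 0$; strict monotonicity of $\mu$ gives the strict monotonicity of $\{k_n\}$. When $p=q$, $\mu$ bijects onto $((q/p^*)(\pi_{pq}/T)^p,\mu(1-))$, and the number of admissible $n$ is obtained by counting the inequalities $\mu(0+)<\lambda/n^p<\mu(1-)$, yielding the finite list $\{k_j\}_{j=1}^{n}$. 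When $p<q$, the U-shape produces two preimages $k_n>\ell_n$ of $\lambda/n^p$ whenever $\lambda/n^p>\lambda_1$ (coinciding at $\lambda=n^p\lambda_1$); the amplitude comparison $|u_{n,k_n}|>|u_{n,\ell_n}|$ at $t\neq jT/n$ follows from Eq.~\eqref{eq:lef} since $R(k)=(2k^q/(1+k^q))^{1/q}$ is strictly increasing in $k$. Finally, for $p>2$ and $\lambda\ge(2q/p^*)(m\pi_{p/2,q}/T)^p$, the first $m$ values $\lambda/j^p$ exceed the upper limit $\mu(1-)$, so no $k_j\in(0,1)$ inverts them; by Theorem~\ref{thm:R=1} these solutions are replaced by the flat-core family, and solving $\Lambda_j(\tau)=\lambda$ produces exactly the stated constraint $\sum_{i=1}^j\tau_i=T-j\pi_{p/2,q}(2q/(\lambda p^*))^{1/p}$.
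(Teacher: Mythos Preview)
Your overall strategy coincides with the paper's: both rewrite $\lambda_n(k)=\lambda$ as a single equation in $k$ (the paper uses $\Phi(k)=(1+k^q)^{1/p}(2k^q/(1+k^q))^{1/q-1/p}K_{pq}(k)$, which is a constant multiple of $\mu(k)^{1/p}$, hence equivalent), read off endpoint limits from Proposition~\ref{prop:K}, and invert by the intermediate value theorem; the flat-core case $p>2$ is handled identically via Theorem~\ref{thm:R=1}.

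The one place your sketch stops short is precisely the step you flagged: the U-shape of $\mu$ when $p<q$. Simply differentiating $\log\mu$ and invoking $K_{pq}'/K_{pq}$ is unlikely to close the argument cleanly, because $K_{pq}'(k)$ has no closed form and the sign analysis of the resulting expression is not transparent. The paper bypasses this by the substitution $r=k^q/(1+k^q)\in(0,1/2)$, which turns $\Phi$ into
\[
\Psi(r)=\int_0^1 \frac{(1+s^q)^{\frac{1}{p}-\frac{1}{q}}}{(1-s^q)^{\frac{1}{p}}}\,\psi\bigl((1+s^q)r\bigr)\,ds,
\qquad \psi(t)=t^{\frac{1}{q}-\frac{1}{p}}(1-t)^{-\frac{1}{p}}.
\]
Since $(\log\psi)''(t)=(1/p-1/q)t^{-2}+(1/p)(1-t)^{-2}>0$, $\psi$ is log-convex, hence convex on $(0,1)$; differentiating under the integral gives $\Psi''(r)>0$, so $\Psi$ (and thus $\Phi$, $\mu$) has a unique interior minimum. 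This convexity device is the missing ingredient in your outline.
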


\begin{proof}
First we assume $1<p \le 2$.
In this case, we have already known that 
all nontrivial solutions of \eqref{eq:lep} are
obtained by Theorem \ref{thm:R<1}.

Now we fix $\lambda>0$. 
We obtain that $\lambda$ is the $j$-th eigenvalue
of \eqref{eq:lep} if and only if from Eq.\,\eqref{eq:lev}
there exists $k \in (0,1)$ such that
\begin{align}
\label{eq:key}
\frac{T}{2j}\left(\frac{\lambda p^*}{q}\right)^{\frac{1}{p}}
=(1+k^q)^{\frac{1}{p}}\left(\frac{2k^q}{1+k^q}\right)
^{\frac{1}{q}-\frac{1}{p}}K_{pq}(k)=:\Phi(k).
\end{align}

Case $p>q$.
$\Phi(k)$ is 
strictly increasing in $(0,1)$ and it follows 
from Proposition \ref{prop:K} that  
$\Phi(0)=0$ and $\lim_{k \to 1-0}\Phi(k)=\infty$.
Thus, there exists a unique $k=k_j(\lambda)$ satisfying   
Eq.\,\eqref{eq:key}.
For $j$ and $k_j$, a unique 
solution $u_{j,k_j}$ of \eqref{eq:lep}
is obtained by Eq.\,\eqref{eq:lef}.

Case $p=q$.
$\Phi(k)$ is 
strictly increasing in $(0,1)$ and it 
follows from Proposition \ref{prop:K} that
$\Phi(0)=\pi_{pq}/2$ and $\lim_{k \to 1-0}\Phi(k)
=\infty$.
Thus, if
\begin{align*}
\frac{T}{2j}\left(\frac{\lambda p^*}{q}\right)^{\frac{1}{p}}
>\frac{\pi_{pq}}{2},
\end{align*}
namely,
\begin{align*}
\lambda>\frac{q}{p^*}\left(\frac{j \pi_{pq}}{T}\right)^p,
\end{align*}
then there exists a unique $k=k_j(\lambda)$ satisfying   
Eq.\,\eqref{eq:key}.
For $j$ and $k_j$, a unique 
solution $u_{j,k_j}$ of \eqref{eq:lep}
is obtained by Eq.\,\eqref{eq:lef}.
 
Case $p<q$. 
It is clear that  
$\lim_{k \to +0}\Phi(k)=\lim_{k \to 1-0}\Phi(k)=\infty$.
Changing variable $r=\frac{k^q}{1+k^q}$, we can 
write $\Phi$ as
\begin{align*}
\Psi(r)=
\int_0^1 \frac{(1+s^q)^{\frac{1}{p}-\frac{1}{q}}}
{(1-s^q)^{\frac{1}{p}}}
\psi((1+s^q)r)\,ds, \quad r \in (0,1/2),
\end{align*}
where $\psi(t)=t^{\frac{1}{q}-\frac{1}{p}}(1-t)^{-\frac{1}{p}}$.
It is easy to see that $\psi$ is convex in $(0,1)$ 
because $\psi(t)>0$ and
\begin{align*}
(\log{\psi(t)})''=\left(\frac{1}{p}-\frac{1}{q}\right)\frac{1}{t^2}
+\frac{1}{p}\frac{1}{(1-t)^2}>0.
\end{align*}
Then, $\Psi$ is twice-differentiable in $(0,1/2)$ and
\begin{align*}
\Psi''(r)=
\int_0^1 \frac{(1+s^q)^{\frac{1}{p}-\frac{1}{q}+2}}
{(1-s^q)^{\frac{1}{p}}}
\psi''((1+s^q)r)\,ds>0.
\end{align*}
Thus, $\Psi$ is convex and there exists $k_* \in (0,1)$ such that 
$\Phi(k_*)$ is the only one critical value,
and hence the minimum of $\Phi$ in $(0,1)$. 

If 
\begin{align*}
\frac{T}{2j}\left(\frac{\lambda p^*}{q}\right)^{\frac{1}{p}}=\Phi(k_*),
\end{align*}
namely,
\begin{align*}
\lambda=j^p\lambda_1:=
\frac{q}{p^*}\left(\frac{2j \Phi(k_*)}{T}\right)^p,
\end{align*}
then $k_*$ satisfies Eq.\,\eqref{eq:key}.
For $j$ and $k_*$, a unique 
solution $u_{j,k_*}$ of \eqref{eq:lep}
is obtained by Eq.\,\eqref{eq:lef}.
Moreover, if 
\begin{align*}
\frac{T}{2j}\left(\frac{\lambda p^*}{q}\right)^{\frac{1}{p}}
>\Phi(k_*),
\end{align*}
namely, $\lambda>j^p\lambda_1$,
then there exist $k=k_j(\lambda)$ and $\ell_j(\lambda)$ such that   
\begin{align*}
k_j(\lambda)&=\Phi^{-1}\left(
\frac{T}{2j}\left(\frac{\lambda p^*}{q}\right)^{\frac{1}{p}}
\right) \in (k_*,1),\\
\ell_j(\lambda)&=\Phi^{-1}\left(
\frac{T}{2j}\left(\frac{\lambda p^*}{q}\right)^{\frac{1}{p}}
\right) \in (0,k_*).
\end{align*}
For $j$, $k_j$ and $\ell_j$, 
solutions $u_{j,k_j}$ and $u_{j,\ell_j}$ of \eqref{eq:lep}
are obtained by Eq.\,\eqref{eq:lef}.

Next, we assume $p>2$. In any case, a similar proof as above with 
$\lim_{k \to 1-0} \Phi(k)=2^{\frac{1}{p}-1}\pi_{\frac{p}{2},q}$
instead of $\lim_{k \to 1-0}\Phi(k)=\infty$
gives that it is impossible to find $k_m \in (0,1)$ above
satisfying Eq.\,\eqref{eq:key}, provided
$$\lambda \ge  \frac{2q}{p^*}\left(\frac{m \pi_{\frac{p}{2},q}}
{T}\right)^p, \quad m \in \mathbb{N}.$$
Then, however, for each $j=1,2,\ldots,m$, we can take 
$\tau \in [0,T)$ such that 
\begin{align*}
\lambda=\frac{2q}{p^*}
\left(\frac{j\pi_{\frac{p}{2},q}}{T-\tau}\right)^p,
\end{align*}
and Theorem \ref{thm:R=1} yields the solutions 
$u_{j,\{\tau_i\}}$,
where $\{\tau_i\}_{i=1}^j$ is any sequence satisfying that
$\tau_i \ge 0,\ \sum_{i=1}^j \tau_i=\tau$.
\end{proof}

It follows directly from Representation
\eqref{eq:flatcore} of Theorem \ref{thm:R=1}
that a kind of solution of \eqref{eq:lep} with $p$-Laplacian
is also an eigenfunction of $(\mathrm{E}_{\frac{p}{2},q})$
with $p/2$-Laplacian.

\begin{cor}
Let $p>2$. For each $n \in \mathbb{N}$, 
any solution $u_{n,\{\tau_i\}}$ 
of \eqref{eq:lep} in Theorem \ref{thm:R=1} satisfies
\begin{align*}
(\phi_{\frac{p}{2}}(u'))'
+\frac{(p-2)q}{p}
\left(\frac{n\pi_{\frac{p}{2},q}}{T-\tau}\right)^{\frac{p}{2}}
\phi_q(u)=0
\end{align*}
in the intervals where $|u_{n,\{\tau_i\}}|<1$, 
where $\tau=\sum_{i=1}^n \tau_i$.
In particular, for each $n \in \mathbb{N}$, 
the solution $u_{n,\{0\}}$ of \eqref{eq:lep} with $\tau=0$
is an eigenfunction of $(\mathrm{E}_{\frac{p}{2},q})$, 
that is,
\begin{align*}
\begin{cases}
(\phi_{\frac{p}{2}}(u'))'+ \frac{(p-2)q}{p}
\left(\frac{n\pi_{\frac{p}{2},q}}{T}\right)^{\frac{p}{2}}
\phi_q(u)=0, & t \in (0,T),\\
u(0)=u(T)=0.
\end{cases}
\end{align*}
Moreover, $u_{n,\{0\}}$ is characterized by $u_{n,R}$ with 
$R=1$ in Eq.\,\eqref{eq:efR} with $p$ replaced by $p/2$.
\end{cor}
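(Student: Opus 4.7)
The plan is to reduce the corollary to a direct application of Proposition 2.1 (with $p$ replaced by $p/2$), using the fact that on intervals where $|u_{n,\{\tau_i\}}|<1$ the eigenfunction built in Theorem \ref{thm:R=1} is nothing but a rescaled copy of $\sin_{\frac{p}{2},q}$. First I would invoke Proposition 2.1 applied to the exponent pair $(p/2,q)$: the function $v(s):=\sin_{\frac{p}{2},q}(s)$ is a classical solution on all of $\mathbb{R}$ of
\begin{equation*}
(\phi_{\frac{p}{2}}(v'))'+\frac{q}{(p/2)^*}\phi_q(v)=0,
\qquad \frac{q}{(p/2)^*}=\frac{(p-2)q}{p}.
\end{equation*}

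Next I would handle the scaling. Looking at Representation \eqref{eq:flatcore}, on every subinterval with $|u_{n,\{\tau_i\}}|<1$ the function has the form $w(t)=\pm v(\omega(t-a))$ with $\omega:=n\pi_{\frac{p}{2},q}/(T-\tau)>0$ and $a\in\mathbb{R}$ constant. A short chain-rule computation gives $\phi_{p/2}(w')=\pm \omega^{p/2-1}\phi_{p/2}(v'(\omega(t-a)))$, hence
\begin{equation*}
(\phi_{\frac{p}{2}}(w'))'=\pm\omega^{p/2}(\phi_{\frac{p}{2}}(v'))'\bigl(\omega(t-a)\bigr)
=\mp\frac{(p-2)q}{p}\omega^{p/2}\phi_q(v(\omega(t-a)))
=-\frac{(p-2)q}{p}\omega^{p/2}\phi_q(w),
\end{equation*}
which is exactly the required equation with coefficient $\frac{(p-2)q}{p}(n\pi_{\frac{p}{2},q}/(T-\tau))^{p/2}$. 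Because the three cases in \eqref{eq:flatcore} splice together smoothly at the ends (the junction with a flat core happens at $w=\pm 1$, $w'=0$, $v'$ vanishes there by the definition of $\cos_{\frac{p}{2},q}$ at $\pi_{\frac{p}{2},q}/2$) and because Proposition 2.1 already asserts the ODE holds at all multiples of $\pi_{\frac{p}{2},q}/2$, no subtlety arises at the boundary points of the non-flat intervals.

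For the second assertion, I would specialize to $\tau=0$. Then $\tau_i=0$ for every $i$, $T_j=jT/n$, and the two outer branches in \eqref{eq:flatcore} meet at the single point $T_{j-1}+T/(2n)$ (the flat middle piece degenerates). Using the symmetry/periodicity extension of $\sin_{\frac{p}{2},q}$ introduced in Section 2.1, the piecewise expression collapses to the single formula
\begin{equation*}
u_{n,\{0\}}(t)=\sin_{\frac{p}{2},q}\!\left(\frac{n\pi_{\frac{p}{2},q}}{T}t\right),
\end{equation*}
and since $|u_{n,\{0\}}|=1$ only at finitely many points the ODE derived in the previous paragraph extends to all of $[0,T]$ by continuity of $(\phi_{\frac{p}{2}}(u'))'$ (again via the end-of-Proposition-2.1 argument). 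Comparing with Eq.~\eqref{eq:efR} for Problem $(\mathrm{E}_{pq})$ after the substitutions $R=1$ and $p\leftarrow p/2$ identifies $u_{n,\{0\}}$ with the $n$-th eigenfunction of $(\mathrm{E}_{\frac{p}{2},q})$; the eigenvalue predicted by Eq.~\eqref{eq:evR} under the same substitution, $\frac{q}{(p/2)^*}(n\pi_{\frac{p}{2},q}/T)^{p/2}=\frac{(p-2)q}{p}(n\pi_{\frac{p}{2},q}/T)^{p/2}$, matches the coefficient obtained above.

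The only mildly technical step is the scaling/sign chain-rule calculation in the second paragraph; everything else is bookkeeping once one recognizes that the ``$k=1$'' solutions constructed via $\lim_{k\to 1-0}\sn_{pq}=\sin_{\frac{p}{2},q}$ in Proposition \ref{prop:K} are genuine $\sin_{\frac{p}{2},q}$-pieces, so the $p/2$-Laplacian identity comes for free from the generalized trigonometric ODE \eqref{eq:sin}.
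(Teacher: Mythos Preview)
Your proposal is correct and follows exactly the route the paper intends: the paper gives no explicit proof for this corollary, merely noting that it ``follows directly from Representation \eqref{eq:flatcore},'' and your argument spells out precisely this---recognize the non-flat pieces of $u_{n,\{\tau_i\}}$ as rescaled copies of $\sin_{\frac{p}{2},q}$, apply Proposition~2.1 with $p$ replaced by $p/2$ (noting $(p/2)^*=p/(p-2)$), and push the scaling through the $p/2$-Laplacian. Your handling of the $\tau=0$ collapse via the symmetry/periodicity of $\sin_{\frac{p}{2},q}$ and the matching with Eq.~\eqref{eq:evR}--\eqref{eq:efR} is also exactly what the final clause of the corollary requires.
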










\end{document}